\journal{Journal of \LaTeX\ Templates}
\newtheorem{theorem}{Theorem}
\newtheorem{corollary}{Corollary}
\newtheorem{lemma}{Lemma}
\newenvironment*{proof}
    {\begin{trivlist}\item[\hspace{\labelsep}\textit{Proof.}]}
    {\hspace*{\fill}\rule{0.5em}{0.5em}\end{trivlist}}
\begin{document}

\begin{frontmatter}

\title{The spectral norm of a Horadam circulant matrix}



\author[mymainaddress]{Jorma K. Merikoski}
\ead{jorma.merikoski@uta.fi}

\author[mymainaddress]{Pentti Haukkanen}
\ead{pentti.haukkanen@uta.fi}

\author[mysecondaryaddress]{Mika Mattila}
\ead{mika.mattila@tut.fi}

\author[mytertiaryaddress]{Timo Tossavainen\corref{mycorrespondingauthor}}
\cortext[mycorrespondingauthor]{Corresponding author}
\ead{timo.tossavainen@ltu.se}

\address[mymainaddress]{Faculty of Natural Sciences, FI-33014 University of Tampere, Finland}
\address[mysecondaryaddress]{Department of Mathematics, Tampere University of Technology, P.O. Box 553, FI-33101 Tampere, Finland}
\address[mytertiaryaddress]{Department of Arts, Communication and Education, Lulea University of Technology, SE-97187 Lulea, Sweden}

\begin{abstract}
Let $a$, $b$, $p$, $q$ be integers and~$(h_n)$ defined by $h_0=a$, $h_1=b$, 
$h_n=ph_{n-1}+qh_{n-2}$, $n=2,3,\dots$. Complementing to certain
previously known results, we study the spectral norm of
the circulant matrix corresponding to $h_0,\dots,h_{n-1}$.
\end{abstract}

\begin{keyword}
 Circulant matrix \sep Fibonacci sequence \sep Horadam sequence \sep Lucas sequence \sep Spectral norm 
\MSC[2010] 15A60 \sep 11B39 \sep 11C20 \sep 15B05
\end{keyword}

\end{frontmatter}


\section{Introduction}

Throughout this paper, let $a,b,p,q\in\mathbb{Z}$. We
define the {\em Horadam sequence} $(h_n)=(h_n(a,b;p,q))$ via
\begin{eqnarray*}
h_0=a,\quad h_1=b,\qquad\qquad
\\
h_n=ph_{n-1}+qh_{n-2},\quad n=2,3,\dots.
\end{eqnarray*}
We also use the following abbreviations:
\begin{enumerate}
\item[] $(f_n)=(h_n(0,1;1,1))$, the Fibonacci sequence;
\item[] $(\tilde{f}_n)=(h_n(0,1;p,q))$, a generalization of the
Fibonacci sequence; 
\item[] $(l_n)=(h_n(2,1;1,1))$, the Lucas sequence;
\item[] $(\tilde{l}_n)=(h_n(2,p;p,q))$, a generalization of the
Lucas sequence. 
\end{enumerate}
Some references call $(\tilde{l}_n)$ the Lucas sequence. In order to keep the language simple, 
we follow the custom in \cite[p. 8]{Ko} and call the sequence of Lucas numbers briefly the Lucas sequence.

For $n \ge 1$, we write
\begin{eqnarray*}
{\bf f}=(f_0,\dots,f_{n-1}),\quad \tilde{{\bf f}}=(\tilde{f}_0,\dots,\tilde{f}_{n-1}),
\\
{\bf l}=(l_0,\dots,l_{n-1}),\quad \tilde{{\bf l}}=(\tilde{l}_0,\dots,\tilde{l}_{n-1}),\,\,\,
\\
{\bf h}=(h_0,\dots,h_{n-1}).\qquad\qquad
\end{eqnarray*}

Let ${\bf x}=(x_0,\dots\,x_{n-1})\in\mathbb{R}^n$.
The corresponding circulant matrix~$\bf C(x)$ is defined as
$$
\bf C(x)=\left(\begin{array}{ccccc}
x_0&x_1&\dots&x_{n-2}&x_{n-1}\\
x_{n-1}&x_0&\dots&x_{n-3}&x_{n-2}\\
x_{n-2}&x_{n-1}&\dots&x_{n-4}&x_{n-3}\\
\vdots&\vdots&\vdots&\vdots&\vdots\\
x_2&x_3&\dots&x_0&x_1\\
x_1&x_2&\dots&x_{n-1}&x_0
\end{array}\right).
$$
We let $\|\cdot\|$ stand for
the spectral norm.
Our problem is to compute $\|\bf C(h)\|$ under suitable assumptions. Recently, Kocer et al.~\cite{KMT},
\. Ipek~\cite{Ip}, Liu~\cite{Li}, and Bah\c si~\cite{Ba} have already studied this question.
We will survey their results in Section~\ref{previous} and give further results in
Sections~\ref{new1} and~\ref{new2}.
Finally, we will complete our paper with some remarks in Section~\ref{remarks}.

\section{Previous results}
\label{previous}

Let us first study the eigenvalues and singular values of~$\bf C(x)$.

\begin{theorem}
The eigenvalues of~$\bf C(x)$ are
$$
\lambda_i=\sum_{j=0}^{n-1}x_j\omega^{-ij},\quad i=1,\dots,n,
$$
where $\omega$ is the $n$'th primitive root of unity.
\end{theorem}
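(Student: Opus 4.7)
The plan is to exhibit an explicit eigenvector for each claimed eigenvalue and then verify by direct computation. For each $i$, I would set
$$
\mathbf{v}_i=(1,\omega^{-i},\omega^{-2i},\dots,\omega^{-(n-1)i})^T
$$
and check that $\mathbf{C}(\mathbf{x})\mathbf{v}_i=\lambda_i\mathbf{v}_i$ with $\lambda_i=\sum_{j=0}^{n-1}x_j\omega^{-ij}$. The choice is motivated by the circulant structure: cyclic shifts should be diagonalized by characters of the cyclic group $\mathbb{Z}/n\mathbb{Z}$, i.e.\ by the columns of the Fourier matrix.

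The key computational step is to read off that the $(k,j)$ entry of $\mathbf{C}(\mathbf{x})$ equals $x_{(j-k)\bmod n}$ (this is directly visible from the displayed matrix). Then
$$
\bigl(\mathbf{C}(\mathbf{x})\mathbf{v}_i\bigr)_k=\sum_{j=0}^{n-1}x_{(j-k)\bmod n}\,\omega^{-ij}.
$$
Substituting $m=(j-k)\bmod n$ and using $\omega^n=1$ to handle the wraparound, the sum factors as
$$
\omega^{-ik}\sum_{m=0}^{n-1}x_m\omega^{-im}=\lambda_i\,\omega^{-ik},
$$
which is precisely the $k$-th entry of $\lambda_i\mathbf{v}_i$. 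Hence each $\mathbf{v}_i$ is an eigenvector with eigenvalue $\lambda_i$.

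Finally I would note that the vectors $\mathbf{v}_1,\dots,\mathbf{v}_n$ form the columns of a (scaled) Fourier matrix, which is nonsingular (Vandermonde on the distinct $n$-th roots of unity). Thus $\mathbf{C}(\mathbf{x})$ is diagonalized with exactly the spectrum $\{\lambda_1,\dots,\lambda_n\}$.

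The only delicate point is the index bookkeeping in the substitution $m=(j-k)\bmod n$: one must confirm that $\omega^{-i(m+k)}=\omega^{-im}\omega^{-ik}$ remains valid after the modular reduction, which is immediate from $\omega^{-in}=1$. Beyond this, the argument is routine, so I do not anticipate any genuine obstacle.
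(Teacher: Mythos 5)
Your proof is correct: the paper itself offers no argument but simply cites Davis \cite[Theorem~3.2.2]{Da}, and your explicit verification that the Fourier vectors $\mathbf{v}_i$ are eigenvectors (using the entry formula $x_{(j-k)\bmod n}$ and $\omega^{-in}=1$), together with the Vandermonde nonsingularity remark to capture the full spectrum, is exactly the standard argument behind that citation. Nothing is missing; you have merely written out the details the paper outsources to the reference.
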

\begin{proof}
See~\cite[Theorem~3.2.2]{Da}.
\end{proof}

\begin{corollary}
The singular values of~$\bf C(x)$ are
$$
\sigma_i=\Big|\sum_{j=0}^{n-1}x_j\omega^{-ij}\Big|,\quad i=1,\dots,n.
$$
Therefore
$$
\|{\bf C(x)}\|=\max_{1\le i\le n}\Big|\sum_{j=0}^{n-1}x_j\omega^{-ij}\Big|.
$$
\end{corollary}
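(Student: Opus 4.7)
The plan is to exploit the fact that every circulant matrix is normal, so that its singular values coincide with the absolute values of its eigenvalues. First I would observe that $\mathbf{C(x)}$ is a polynomial in the basic cyclic shift matrix
$$
\mathbf{P}=\mathbf{C}(0,1,0,\dots,0),
$$
namely $\mathbf{C(x)}=\sum_{j=0}^{n-1}x_j\mathbf{P}^j$. Since $\mathbf{P}$ is a permutation matrix, it is unitary, so $\mathbf{P}^*=\mathbf{P}^{-1}=\mathbf{P}^{n-1}$. Consequently $\mathbf{C(x)}^*$ is also a polynomial in $\mathbf{P}$, and hence $\mathbf{C(x)}^*\mathbf{C(x)}=\mathbf{C(x)}\mathbf{C(x)}^*$; that is, $\mathbf{C(x)}$ is normal.

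Next I would invoke the standard fact that a normal matrix $A$ can be unitarily diagonalized, $A=U\operatorname{diag}(\lambda_1,\dots,\lambda_n)U^*$, so that $A^*A=U\operatorname{diag}(|\lambda_1|^2,\dots,|\lambda_n|^2)U^*$. The singular values of $A$, being the nonnegative square roots of the eigenvalues of $A^*A$, are therefore exactly $|\lambda_1|,\dots,|\lambda_n|$. Applying this to $\mathbf{C(x)}$ together with the preceding theorem gives
$$
\sigma_i=|\lambda_i|=\Big|\sum_{j=0}^{n-1}x_j\omega^{-ij}\Big|,\quad i=1,\dots,n.
$$

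Finally, the spectral norm of any matrix equals its largest singular value, so taking the maximum over $i$ yields the stated formula for $\|\mathbf{C(x)}\|$. The only slightly delicate step is the verification that $\mathbf{C(x)}$ is normal; everything else is routine once the eigenvalue formula of the preceding theorem is in hand. If one prefers to avoid invoking normality explicitly, an equivalent route is to note that the unitary Fourier matrix $F$ with entries $n^{-1/2}\omega^{-ij}$ simultaneously diagonalizes all circulants, so $\mathbf{C(x)}=F\operatorname{diag}(\lambda_1,\dots,\lambda_n)F^*$ is already a singular value decomposition (up to signs), and the conclusion follows immediately.
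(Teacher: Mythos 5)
Your proof is correct and follows essentially the same route as the paper: the paper's argument is simply that $\bf C(x)$ is normal, so its singular values are the absolute values of its eigenvalues, and the spectral norm is the largest singular value. You merely spell out the details (normality via the shift matrix, unitary diagonalization) that the paper leaves implicit.
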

\begin{proof}
Since $\bf C(x)$ is
normal, its singular values are the absolute values of eigenvalues.
\end{proof}
Applying this corollary, Kocer et al. \cite[Theorem~2.2]{KMT} proved that
$$
\|{\bf C(h)}\|=\max_{0\le i\le n-1}\Big|\frac{h_n+(pa-b+qh_{n-1})\omega^{-i}-a}
{q\omega^{-2i}+p\omega^{-i}-1}\Big|.
$$
The maximization problem restricts the use of this formula. The same authors also
proved  \cite[Corollary~2.3]{KMT} that
\begin{eqnarray}
\label{kocer}
\|{\bf C(h)}\|=\frac{h_n+qh_{n-1}+(p-1)a-1}{p+q-1},
\end{eqnarray}
assuming that $p,q\ge 1$ and $b=1$. Doing so, they suppose nothing on~$a$, but apparently
$a\ge 0$ must hold. (To see this, take $n=1$.)

Further, \. Ipek \cite[Theorem~1]{Ip} proved (independently of~(\ref{kocer})) that
$$
\|{\bf C(f)}\|=f_{n+1}-1
$$
and \cite[Theorem~2]{Ip}
$$
\|{\bf C(l)}\|=f_{n+2}+f_n-1.
$$

Liu~\cite[Theorem~9]{Li} extended~(\ref{kocer}) to
\begin{eqnarray}
\label{liu}
\|{\bf C(h)}\|=\frac{h_n+qh_{n-1}+(p-1)a-b}{p+q-1},
\end{eqnarray}
whenever $p+q\ne 1$, and to
$$
\|{\bf C(h)}\|=\frac{qh_{n-1}+(n-1)(qa+b)+a}{q+1},
$$
as $p+q=1$,
but assumed nothing about $a,b,p,q$.

Bah\c si \cite[Theorem~2.1]{Ba} proved (independently of~(\ref{liu})) that, if $p,q\ge 1$, then
$$
\|{\bf C}(\tilde{{\bf f}})\|=\frac{\tilde{f}_n+q\tilde{f}_{n-1}-1}{p+q-1}
$$
and \cite[Theorem~2.2]{Ba}
$$
\|{\bf C}(\tilde{{\bf l}})\|=\frac{\tilde{l}_n+q\tilde{l}_{n-1}+p-2}{p+q-1}.
$$

\section{Computation of $\|\bf C(h)\|, h\ge 0$}
\label{new1}

We first take a more general viewpoint and verify a theorem that applies also to other matrices than circulant ones or those having elements from a recurrence sequence.
If a matrix~$\bf A$ and a vector~$\bf x$ are entrywise
nonnegative (respectively, positive), we denote $\bf A\ge O$ and $\bf x\ge 0$
(respectively, $\bf A>O$ and $\bf x>0$).
We let $\lambda(\bf A)$ denote the Perron root of
a square matrix $\bf A\ge O$.

\begin{theorem}
\label{nonneg}
Assume that an $n\times n$ matrix $\bf A\ge O$ has all row sums and column sums
equal; let~$s$ be their common value. Then $\lambda({\bf A)=\|A}\|=s$.
\end{theorem}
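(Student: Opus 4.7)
The plan is to split the claim into three short steps and prove the two inequalities $s \le \lambda(\mathbf{A})$, $\lambda(\mathbf{A}) \le s$, and then $s \le \|\mathbf{A}\|$, $\|\mathbf{A}\| \le s$.

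First I would exhibit $s$ as an eigenvalue of $\mathbf{A}$. Let $\mathbf{e} = (1,\dots,1)^{T}$. The $i$-th coordinate of $\mathbf{A}\mathbf{e}$ is exactly the $i$-th row sum of $\mathbf{A}$, which equals $s$ by hypothesis, so $\mathbf{A}\mathbf{e} = s\mathbf{e}$. This already gives the two easy inequalities:
$\lambda(\mathbf{A}) \ge s$ (since $s$ is an eigenvalue of a nonnegative matrix), and
$\|\mathbf{A}\| \ge \|\mathbf{A}\mathbf{e}\|/\|\mathbf{e}\| = s$.

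Second, for the upper bound on the Perron root I would invoke the well-known Perron--Frobenius bound that for any $\mathbf{B}\ge\mathbf{O}$ one has $\lambda(\mathbf{B}) \le \max_i \sum_j B_{ij}$. Since every row sum of $\mathbf{A}$ equals $s$, this immediately gives $\lambda(\mathbf{A}) \le s$, completing the identity $\lambda(\mathbf{A}) = s$.

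Third, for the upper bound on the spectral norm I would use the classical inequality $\|\mathbf{A}\| \le \sqrt{\|\mathbf{A}\|_{1}\,\|\mathbf{A}\|_{\infty}}$, where $\|\mathbf{A}\|_{\infty}$ is the maximum row sum of absolute values and $\|\mathbf{A}\|_{1}$ is the maximum column sum of absolute values. Under the hypotheses $\mathbf{A}\ge\mathbf{O}$ and equal row and column sums, both factors equal $s$, hence $\|\mathbf{A}\| \le s$. This, together with the first step, gives $\|\mathbf{A}\| = s$.

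There is no real technical obstacle; the only conceptually interesting point is where the column-sum hypothesis is actually used. It is needed solely for the upper bound $\|\mathbf{A}\| \le s$ via the factor $\|\mathbf{A}\|_{1} = s$. Without it the argument still yields $\lambda(\mathbf{A}) = s$, but a nonnegative matrix with equal row sums need not be normal, so its spectral norm can in general exceed its spectral radius; the column-sum condition is precisely what closes that gap.
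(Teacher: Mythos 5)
Your proof is correct, but it follows a different route from the paper's. Both arguments open the same way, with $\mathbf{A}\mathbf{e}=s\mathbf{e}$ for $\mathbf{e}=(1,\dots,1)^T$; after that they diverge. You establish $\lambda(\mathbf{A})=s$ and $\|\mathbf{A}\|=s$ by squeezing each quantity between two classical bounds: the lower bounds come from the eigenvector $\mathbf{e}$, the upper bound on the Perron root from $\lambda(\mathbf{A})\le\max_i\sum_j a_{ij}$, and the upper bound on the spectral norm from Schur's inequality $\|\mathbf{A}\|\le\sqrt{\|\mathbf{A}\|_1\,\|\mathbf{A}\|_\infty}$, with the column-sum hypothesis entering only through $\|\mathbf{A}\|_1=s$. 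The paper instead uses a single Perron--Frobenius fact (a nonnegative eigenvalue with a strictly positive eigenvector of a nonnegative matrix is its Perron root, \cite[Theorem~8.3.4]{HJ}) twice: once for $\mathbf{A}$ to get $\lambda(\mathbf{A})=s$, and once for $\mathbf{A}^T\mathbf{A}$, since the column-sum hypothesis gives $\mathbf{A}^T\mathbf{e}=s\mathbf{e}$ and hence $\mathbf{A}^T\mathbf{A}\mathbf{e}=s^2\mathbf{e}$, yielding $\|\mathbf{A}\|^2=\lambda(\mathbf{A}^T\mathbf{A})=s^2$ in one stroke. The paper's argument is more unified (one lemma, no norm inequalities), while yours is arguably more elementary in its ingredients and makes explicit exactly where each hypothesis is used; your closing observation that the column sums are needed only for the upper bound $\|\mathbf{A}\|\le s$ matches the structure of the paper's proof, where they are needed only for the relation $\mathbf{A}^T\mathbf{e}=s\mathbf{e}$.
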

\begin{proof}
Denoting ${\bf e}=(1,\dots,1)\in\mathbb{R}^n$, we have ${\bf Ae}={\bf A}^T{\bf e}=s\bf e$. So, $s$
is an eigenvalue of $\bf A$ and~${\bf A}^T$, and $\bf e$ is a corresponding eigenvector.
Since $\bf e>0$, actually $s=\lambda({\bf A})=\lambda({\bf A}^T)$, see
\cite[Theorem~8.3.4]{HJ}. Because
$$
{\bf A}^T{\bf Ae=A}^Ts{\bf e}=s{\bf A}^T{\bf e}=s^2\bf e,
$$
we similarly see that $s^2=\lambda({\bf A}^T{\bf A})=\|{\bf A}\|^2$.
\end{proof}

\begin{corollary}
\label{circx}
If ${\bf x}=(x_0,\dots,x_{n-1})\ge\bf 0$, then
$$
\|{\bf C(x)}\|=x_0+\dots+x_{n-1}.
$$
\end{corollary}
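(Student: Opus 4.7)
The plan is to verify that the circulant matrix $\mathbf{C(x)}$ satisfies the hypotheses of Theorem~\ref{nonneg} and then apply that theorem directly. The conclusion is the common row/column sum, which should match $s = x_0 + \cdots + x_{n-1}$.

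First, I would observe that the assumption $\mathbf{x} \geq \mathbf{0}$ immediately yields $\mathbf{C(x)} \geq \mathbf{O}$, since every entry of $\mathbf{C(x)}$ is one of $x_0,\ldots,x_{n-1}$. Next, I would note from the explicit form of $\mathbf{C(x)}$ given in the introduction that each row is a cyclic permutation of $(x_0,\ldots,x_{n-1})$; hence every row sum equals $s := x_0 + \cdots + x_{n-1}$. Symmetrically, inspection of the columns shows that each column also contains every $x_j$ exactly once (each $x_j$ appears once per row and the cyclic shift structure distributes it to every column across the $n$ rows), so every column sum equals $s$ as well.

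With these three observations in hand, Theorem~\ref{nonneg} applies with this common value $s$, giving $\|\mathbf{C(x)}\| = s = x_0 + \cdots + x_{n-1}$, as desired.

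There is essentially no obstacle here: the corollary is a direct specialization of Theorem~\ref{nonneg} to the circulant case, and the only thing to check is the doubly-stochastic-up-to-scaling structure of $\mathbf{C(x)}$, which is visible from the matrix's defining formula. The proof should therefore be only a few lines.
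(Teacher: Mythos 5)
Your proposal is correct and follows exactly the route the paper intends: the corollary is stated as an immediate consequence of Theorem~\ref{nonneg}, since ${\bf C(x)}\ge{\bf O}$ and every row and column of a circulant matrix is a permutation of $(x_0,\dots,x_{n-1})$, so all row and column sums equal $x_0+\dots+x_{n-1}$. The paper gives no separate argument beyond this, so your few-line verification matches its (implicit) proof.
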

In order to apply this corollary in the case ${\bf x=h}$, we must compute
$h_0+\dots+h_{n-1}$.

\begin{lemma}
\label{hsum}
If $p+q\ne 1$, then
\begin{eqnarray}
\label{hsum1}
h_0+\dots+h_{n-1}=\frac{h_n+qh_{n-1}+(p-1)a-b}{p+q-1}.
\end{eqnarray}
If $p+q=1$ and $p\ne 2$, then
\begin{eqnarray}
\label{hsum2}
h_0+\dots+h_{n-1}=\frac{qh_{n-1}+(n-1)(qa+b)+a}{q+1}.
\end{eqnarray}
If $p=2$ and $q=-1$, then
\begin{eqnarray}
\label{hsum3}
h_0+\dots+h_{n-1}=n\,\frac{h_{n-1}+a}{2}.
\end{eqnarray}
\end{lemma}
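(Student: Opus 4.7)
The plan is to exploit the recurrence $h_k = p h_{k-1} + q h_{k-2}$ by summing it over $k$ to obtain a relation among partial sums. Writing $S_n = h_0 + \dots + h_{n-1}$ and summing the recurrence for $k = 2, 3, \dots, n$, the three sums that appear can each be re-indexed in terms of $S_n$, $S_{n-1}$, and the boundary values $a$, $b$, $h_n$, $h_{n-1}$. After substituting $S_{n-1} = S_n - h_{n-1}$ and cleaning up, this yields the single identity
\[
(p + q - 1)\, S_n = h_n + q h_{n-1} + (p - 1) a - b. \qquad (\ast)
\]
When $p + q \neq 1$, dividing by $p + q - 1$ gives (\ref{hsum1}) immediately.

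When $p + q = 1$, identity $(\ast)$ degenerates and fails to determine $S_n$. I would handle the two remaining subcases by solving the recurrence explicitly, since the characteristic polynomial $x^2 - p x - q$ has $1$ as a root precisely when $p + q = 1$, and the subcases correspond to whether this root is simple or double. In case (\ref{hsum2}), the roots are $1$ and $-q$ with $-q \neq 1$; writing $h_n = A + B(-q)^n$ and solving for $A$, $B$ from the initial data gives $A = (qa+b)/(q+1)$ and $B = (a-b)/(q+1)$. Summing a constant plus a geometric series produces a closed form for $S_n$, and a short algebraic rearrangement using $h_{n-1} = A + B(-q)^{n-1}$ puts it into the stated form.

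For case (\ref{hsum3}) the root $1$ has multiplicity two; the recurrence collapses to $h_n - h_{n-1} = h_{n-1} - h_{n-2}$, so $(h_k)$ is an arithmetic progression with $h_k = a + k(b-a)$. Summing $h_k$ from $k = 0$ to $n - 1$ is the standard triangular-number computation, and checking that the result equals $n(h_{n-1} + a)/2$ is routine.

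The main obstacle is the middle case. Once the closed form of $h_n$ is in hand it is straightforward, but one must avoid starting from $(\ast)$, since it reduces there to $0 = 0$ and carries no information about $S_n$. A cleaner alternative would be direct induction on $n$, whose step reduces to verifying the identity $h_n + q h_{n-1} = qa + b$; this identity in turn follows from $h_n - h_{n-1} = (b - a)(-q)^{n-1}$, obtained by iterating the relation $h_n - h_{n-1} = -q(h_{n-1} - h_{n-2})$ that the assumption $p = 1 - q$ produces.
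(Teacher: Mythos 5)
Your proof is correct, but it takes a genuinely different route from the paper: the paper does not actually derive (\ref{hsum1}) and (\ref{hsum2}), it simply cites Horadam \cite[Equation~(3.5)]{Ho} and Liu \cite[Lemma~5]{Li} for those two identities, and only observes (as you do) that (\ref{hsum3}) is trivial because $(h_n)$ is then arithmetic. Your argument is a self-contained derivation: summing the recurrence $h_k=ph_{k-1}+qh_{k-2}$ over $k=2,\dots,n$ does give $(p+q-1)S_n=h_n+qh_{n-1}+(p-1)a-b$, which settles (\ref{hsum1}) when $p+q\ne 1$; and in the degenerate case $p+q=1$, $q\ne -1$, the characteristic polynomial factors as $x^2-px-q=(x-1)(x+q)$ with distinct roots, and your closed form $h_n=A+B(-q)^n$ with $A=(qa+b)/(q+1)$, $B=(a-b)/(q+1)$ leads, after summing the geometric part, to $S_n=nA+B\bigl(1-(-q)^n\bigr)/(q+1)$, which indeed rearranges to the stated right-hand side; the alternative induction hinge $h_n+qh_{n-1}=qa+b$ is likewise valid, being the invariant forced by $p=1-q$. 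You also correctly flag the one conceptual danger, namely that the telescoped identity collapses to $0=0$ when $p+q=1$ and carries no information there. What the paper's citation buys is brevity; what your version buys is a complete elementary proof independent of the references, at the cost of routine algebra. One tiny remark: the summation argument formally needs $n\ge 2$, but the case $n=1$ ($S_1=a$) is an immediate check, so nothing is lost.
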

\begin{proof}
Claim~(\ref{hsum1}) is equivalent to \cite[Equation~(3.5)]{Ho} and
to \cite[Lemma~5(1)]{Li}.
Claim~(\ref{hsum2}) is equivalent to \cite[Lemma~5(2)]{Li}.
Claim~(\ref{hsum3}) is
trivial, because the sequence~$(h_n)$ is arithmetic.
\end{proof}
We have now proved the following theorem.

\begin{theorem}
\label{thm}
If $\bf h\ge 0$, then
$$
\|{\bf C(h)}\|=h_0+\dots+h_{n-1},
$$
where $h_0+\dots+h_{n-1}$ is as in Lemma~{\rm \ref{hsum}}.
\end{theorem}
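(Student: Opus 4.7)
The plan is essentially bookkeeping: the theorem follows by chaining the two results already assembled in this section. The hypothesis $\bf h\ge 0$ says precisely that $h_i\ge 0$ for every $0\le i\le n-1$, which is exactly what is required to invoke Corollary~\ref{circx} with $\bf x=h$. That corollary, in turn, rests on Theorem~\ref{nonneg}, whose hypothesis is automatically satisfied by any circulant: every row and every column of $\bf C(h)$ is a cyclic permutation of $(h_0,\dots,h_{n-1})$, so all row sums and all column sums equal the common value $h_0+\dots+h_{n-1}$. Thus the first step yields $\|\bf C(h)\|=h_0+\dots+h_{n-1}$ with no further argument.

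The second step is simply to substitute the closed-form evaluation of $h_0+\dots+h_{n-1}$ from Lemma~\ref{hsum}, producing the three branches quoted in that lemma. The only thing worth checking is that these branches truly cover all integer pairs $(p,q)$: case (\ref{hsum1}) handles $p+q\ne 1$; when $p+q=1$ the remaining possibilities split according to whether $p\ne 2$, giving case (\ref{hsum2}), or $p=2$, which forces $q=-1$ and places us in case (\ref{hsum3}). Hence the case analysis is exhaustive and no $(p,q)$ is omitted.

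There is no genuine obstacle here. The conceptual point of the theorem is simply the observation that, because $\bf C(h)$ is a nonnegative normal matrix with constant row sums, its spectral norm coincides with its Perron root and equals that common row sum; the ostensibly delicate maximization of Corollary~\ref{circx} (taking a maximum over $n$ complex exponential sums) collapses to a triviality once nonnegativity is assumed. The closed-form expressions in Lemma~\ref{hsum} are standard identities for the partial sums of a second-order linear recurrence, already available in \cite{Ho,Li}, so no computation is required beyond citing them.
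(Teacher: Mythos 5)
Your proposal is correct and follows essentially the same route as the paper, which obtains the theorem immediately by combining Corollary~\ref{circx} (itself a consequence of Theorem~\ref{nonneg}, since a circulant with nonnegative entries has all row and column sums equal to $h_0+\dots+h_{n-1}$) with the closed-form sums of Lemma~\ref{hsum}; your explicit check that the three cases of the lemma exhaust all integer pairs $(p,q)$ is a harmless addition. (One small slip in your closing remark: the maximization over exponential sums appears in the earlier corollary on singular values, not in Corollary~\ref{circx}.)
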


\section{Generalization of Theorem~\ref{thm}}
\label{new2}

Can the assumption $\bf h\ge 0$ be weakened?
Again, we begin by taking a more general viewpoint. For $m\in\mathbb{Z}$, we set
$$
m_n=m-\Big\lfloor\frac{m}{n}\Big\rfloor n.
$$

\begin{theorem}
Let $x=(x_0,\dots,x_{n-1})\in\mathbb{R}^n$. If
$$
\sum_{i=0}^{n-1}x_ix_{(i+j-1)_n}\ge 0
$$
for all $j=1,\dots,n$, then
\begin{eqnarray}
\label{claim}
{\|\bf C(x)\|}=|x_0+\dots+x_{n-1}|.
\end{eqnarray}
\end{theorem}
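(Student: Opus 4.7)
The plan is to reduce to the nonnegative case already handled by Corollary~\ref{circx} by passing to the Gram matrix $\mathbf{C}(\mathbf{x})^T\mathbf{C}(\mathbf{x})$. Because $\mathbf{C}(\mathbf{x})^T\mathbf{C}(\mathbf{x})$ is symmetric and positive semidefinite, its spectral norm equals its largest eigenvalue, and hence
\[
\|\mathbf{C}(\mathbf{x})\|^2 \;=\; \lambda_{\max}\bigl(\mathbf{C}(\mathbf{x})^T\mathbf{C}(\mathbf{x})\bigr) \;=\; \bigl\|\mathbf{C}(\mathbf{x})^T\mathbf{C}(\mathbf{x})\bigr\|.
\]
So it will suffice to compute the spectral norm of the Gram matrix.

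Next, I would verify by direct index manipulation that the Gram matrix is itself circulant. Since the $(i,j)$ entry of $\mathbf{C}(\mathbf{x})$ is $x_{(j-i)_n}$, the substitution $\ell=(j-i)_n$ gives
\[
\bigl(\mathbf{C}(\mathbf{x})^T\mathbf{C}(\mathbf{x})\bigr)_{j,k} \;=\; \sum_{i=0}^{n-1} x_{(j-i)_n}\, x_{(k-i)_n} \;=\; \sum_{\ell=0}^{n-1} x_\ell\, x_{(\ell+k-j)_n} \;=\; r_{(k-j)_n},
\]
where $r_m=\sum_{i=0}^{n-1}x_i\,x_{(i+m)_n}$. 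Thus $\mathbf{C}(\mathbf{x})^T\mathbf{C}(\mathbf{x}) = \mathbf{C}(\mathbf{r})$ with $\mathbf{r}=(r_0,\dots,r_{n-1})$. The hypothesis (with $j$ replaced by $m+1$) is precisely $r_m\ge 0$ for every $m=0,\dots,n-1$, i.e.\ $\mathbf{r}\ge\mathbf{0}$. Corollary~\ref{circx} therefore applies to $\mathbf{C}(\mathbf{r})$ and yields
\[
\|\mathbf{C}(\mathbf{x})\|^2 \;=\; r_0+r_1+\cdots+r_{n-1}.
\]

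To finish, I would evaluate this sum by interchanging summations:
\[
\sum_{m=0}^{n-1} r_m \;=\; \sum_{i=0}^{n-1} x_i \sum_{m=0}^{n-1} x_{(i+m)_n} \;=\; \Bigl(\sum_{i=0}^{n-1} x_i\Bigr)^2,
\]
because for each fixed $i$ the inner sum is just a permutation of $x_0+\cdots+x_{n-1}$. Taking positive square roots then gives (\ref{claim}). The only step requiring any care is the bookkeeping in the index manipulation that identifies $\mathbf{C}(\mathbf{x})^T\mathbf{C}(\mathbf{x})$ as the circulant $\mathbf{C}(\mathbf{r})$; once that structure is in hand, everything else is either a routine double-sum or a direct appeal to Corollary~\ref{circx}.
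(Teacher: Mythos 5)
Your proof is correct and is essentially the paper's argument: both pass to the Gram matrix $\mathbf{C}(\mathbf{x})^T\mathbf{C}(\mathbf{x})$, whose entries are exactly the nonnegative sums in the hypothesis and whose row and column sums all equal $\bigl(\sum_i x_i\bigr)^2$, and then conclude $\|\mathbf{C}(\mathbf{x})\|^2=\bigl(\sum_i x_i\bigr)^2$ from the nonnegative equal-sums result. The only cosmetic difference is that you verify the equal-sums condition by observing that the Gram matrix is itself a circulant $\mathbf{C}(\mathbf{r})$ and citing Corollary~\ref{circx}, whereas the paper checks the row and column sums of this matrix directly and applies Theorem~\ref{nonneg}.
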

\begin{proof}
Write ${\bf B}=(b_{ij})={\bf C(x)}^T\bf C(x)$.
Letting ${\bf c}_1,\dots,{\bf{c}}_n$ to denote the column vectors of~$\bf C(x)$,
we have
$$
b_{1j}={\bf c}_1\cdot{\bf c}_j=\sum_{i=0}^{n-1}x_ix_{(i+j-1)_n}
$$
for all $j=1,\dots,n$. So, the first row of~$\bf B$ is nonnegative.
Summing its elements gives us
$$
r_1=\sum_{j=1}^n\sum_{i=0}^{n-1}x_ix_{(i+j-1)_n}=
\sum_{i=0}^{n-1}x_i\sum_{j=1}^nx_{(i+j-1)_n}=
\Big(\sum_{i=0}^{n-1}x_i\Big)^2.
$$
The last equation follows from the fact that
$$
\big\{i_n,\dots, (i+n-1)_n\big\}=\{0,\dots,n-1\}
$$
for all $i=0,\dots,n-1$.

A simple modification of the above reasoning applies to all rows of~$\bf B$. Consequently,
$\bf B\ge O$ with row sums
$$
r_1=\dots=r_n=\Big(\sum_{i=0}^{n-1}x_i\Big)^2.
$$
Since $\bf B$ is symmetric, every of its column sums has this value, too.
Applying Theorem~\ref{nonneg} to~$\bf B$, we therefore obtain
$$
\|{\bf C(x)}\|^2=\lambda({\bf B})=\Big(\sum_{i=0}^{n-1}x_i\Big)^2,
$$
and (\ref{claim}) follows.
\end{proof}

\begin{corollary}
If
\begin{eqnarray}
\label{assumption}
\sum_{i=0}^{n-1}h_ih_{(i+j-1)_n}\ge 0
\end{eqnarray}
for all $j=1,\dots,n$, then\
$$
\|{\bf C(h)}\|=|h_0+\dots+h_{n-1}|,
$$
where $h_0+\dots+h_{n-1}$ is as in Lemma~{\rm \ref{hsum}}.
\end{corollary}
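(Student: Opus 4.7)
The plan is to apply the preceding theorem directly with ${\bf x}={\bf h}$. Hypothesis~(\ref{assumption}) of the corollary is exactly the hypothesis of the theorem specialized to the Horadam vector, namely $\sum_{i=0}^{n-1}x_ix_{(i+j-1)_n}\ge 0$ for $j=1,\dots,n$ with $x_i=h_i$. So the theorem fires without modification and delivers
$$
\|{\bf C(h)}\|=|h_0+\dots+h_{n-1}|.
$$
Nothing further about the specific recurrence structure of $(h_n)$ is needed at this stage; the Perron-type argument inside the theorem already encapsulates the work.

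To obtain the closed-form version claimed in the statement, I would then simply substitute the value of the sum $h_0+\dots+h_{n-1}$ furnished by Lemma~\ref{hsum}, splitting into the three cases $p+q\ne 1$, $p+q=1$ with $p\ne 2$, and $(p,q)=(2,-1)$. The absolute value must be retained throughout, because in contrast to Theorem~\ref{thm} (where ${\bf h}\ge{\bf 0}$ forced the sum to be nonnegative) the present hypothesis allows individual $h_i$ to be negative even when all the cross-product sums $\sum_i h_i h_{(i+j-1)_n}$ are nonnegative.

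The only conceivable obstacle is verifying that the corollary is indeed a strict generalization, i.e.\ that hypothesis~(\ref{assumption}) is genuinely weaker than ${\bf h}\ge{\bf 0}$; however this is not needed for the proof itself and would belong to the remarks section. Thus the proof reduces to a one-line invocation of the theorem followed by a reference to Lemma~\ref{hsum}.
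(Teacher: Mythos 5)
Your proof is correct and matches the paper's intent exactly: the corollary is an immediate specialization of the preceding theorem to ${\bf x}={\bf h}$, with the closed-form value of $h_0+\dots+h_{n-1}$ supplied by Lemma~\ref{hsum} (and the absolute value retained, as you rightly note). The paper gives no separate proof precisely because this one-line invocation suffices.
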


\section{Concluding remarks}
\label{remarks}

In Section 2, we saw that, in the previous literature, $\|\bf C(h)\|$ is computed under various assumptions on $\bf h$. For example, in \cite{KMT}, the Horadam numbers were involved requiring that $a\ge 0$, $b=1$ and $p,q\ge1$.
We assumed first only that $\bf h\ge 0$, and then, even more generally, that~(\ref{assumption}) holds. As byproducts, Corollary 2 and Theorem 4 provided us with the corresponding results on~$\|\bf C(x)\|$, too.

We also mention that Yazlik and Taskara~\cite{YT} defined  the notion of 
a generalized $k$-Horadam sequence~$(H_{k,n})_{n\in\mathbb{N}}$. 
In fact, Liu~\cite{Li} ended up with (2) by studying a
circulant matrix corresponding to such a sequence. However, since
$k$ is fixed in \cite[Definition~1]{YT}, this sequence is nothing but
an ordinary Horadam sequence $(h_n)=(h_n(a,b;p,q))$ with $p=f(k)$ and $q=g(k)$.

\section*{References}

\bibliography{mybibfile}

\end{document}